\newtheorem{theorem}{Theorem}[section]
\newtheorem{theoremalph}{Theorem}
\newtheorem{lemma}[theorem]{Lemma}
\newtheorem{definition}[theorem]{Definition}
\newtheorem{remark}[theorem]{Remark}
\newcommand{\be}{\begin{equation}}
 \newcommand{\ee}{\end{equation}}
\newcommand{\R}{\mathbb{R}}
\newcommand{\diam}{\operatorname{Diam}}
\newcommand{\hm}{{\mathcal H}}
\newcommand{\lm}{{\mathcal L}}
\newcommand{\set}{{\rm{set}}}
\DeclareMathOperator{\Ric}{Ric}
\newcommand{\Lip}{\operatorname{Lip}}
\newcommand{\mass}{{\mathbf M}}
\begin{document}

\title[IF and GH convergence with Ricci bounded below]{Intrinsic flat and Gromov-Hausdorff convergence of manifolds with Ricci curvature bounded below}

\author{Rostislav Matveev}
\address{Max Planck Institute for Mathematics in the Sciences}
\email{matveev@mis.mpg.de}

\author{Jacobus W. Portegies}
\thanks{The authors would like to thank the Max Planck Institute for Mathematics in the Sciences for its hospitality and support.}
\address{Max Planck Institute for Mathematics in the Sciences}
\email{jacobus.portegies@mis.mpg.de}

\keywords{}



\begin{abstract}
  We show that for a noncollapsing sequence of closed, connected, oriented
  Riemannian manifolds with Ricci curvature uniformly bounded from
  below and diameter uniformly bounded above, Gromov-Hausdorff convergence
  essentially agrees with intrinsic flat convergence.
\end{abstract}

\maketitle

In this manuscript, we consider the class $\mathcal{M}(n, \Lambda, v, D)$ of $n$-dimensional, closed, connected, 
oriented Riemannian manifolds $M$ with
\begin{equation*}
\Ric_{M} \geq -(n-1) \Lambda, \quad \hm^n(M) \geq v > 0, \quad \diam(M) \leq D,
\end{equation*}
and show that for a sequence of manifolds in this class, Gromov-Hausdorff convergence essentially agrees with intrinsic flat convergence.

The intrinsic flat distance was introduced by Sormani and Wenger
\cite{Sormani-Intrinsic}, and relates to the flat distance as the Gromov-Hausdorff distance relates to the Hausdorff distance.

In general, there are 
 significant differences between 
Gromov-Hausdorff convergence and intrinsic flat convergence: the
intrinsic flat limit may be noncompact, there are sequences
of Riemannian manifolds that do have an intrinsic flat limit but do
not have a Gromov-Hausdorff limit, the intrinsic flat limit is always
rectifiable, etc.. 
However, in the presence of a uniform lower bound on the Ricci
curvature, the theory on the structure of Gromov-Hausdorff limits
developed by Cheeger and Colding \cite{Cheeger-Colding-I,
  Cheeger-Colding-II, Cheeger-Colding-III} suggests that the two
concepts may not differ all that much.

The first result in this direction goes back to Sormani and Wenger
\cite{Sormani-Weak}. 
They show that if a sequence of manifolds $M_i \in \mathcal{M}(n, \Lambda = 0, v, D)$ with \emph{nonnegative} Ricci curvature converges in the Gromov-Hausdorff distance to a metric space $X$, then a subsequence will converge in the intrinsic flat distance to an integral current space $(X, d_X, T)$: a metric space $X$ endowed
with an integral current $T$ (in the sense of Ambrosio-Kirchheim
\cite{Ambrosio-Currents}), that is completely settled 
(meaning $X$ is
exactly the set of positive $n$-dimensional lower density for $T$).
Recently, Munn \cite{Munn-Intrinsic} has obtained a similar result for sequences of manifolds with a uniform, two-sided bound on the Ricci curvature.

Li and Perales \cite{Li-Perales} have proved that for a
sequence of integral current spaces for which the metric spaces are
Alexandrov spaces of nonnegative curvature and have a uniform diameter
upper bound, either the sequence converges in the intrinsic flat distance
to the zero space, or a subsequence converges in both the
Gromov-Hausdorff and the intrinsic flat distance, and the underlying
metric spaces in the limit are the same.

This manuscript extends the results by
Sormani and Wenger \cite{Sormani-Weak} and Munn \cite{Munn-Intrinsic} to sequences of manifolds
with an arbitrary uniform lower bound
on the Ricci curvature and additionally shows that the limiting current is essentially unique, has multiplicity one, 
and has mass equal to the Hausdorff measure on the limiting space.

Before we state our main results more precisely, we need to introduce some notation.
The class $\mathcal{M}(n, \Lambda, v, D)$ is
precompact both in the Gromov-Hausdorff distance and in the intrinsic flat distance \cite{Sormani-Weak}.  We denote the completion
with respect to these distances respectively by $\mathcal{M}^{GH}(n,
\Lambda, v, D)$ and $\mathcal{M}^{IF}(n, \Lambda, v, D)$.

There is an involution $\iota$ acting on the space $\mathcal{M}^{IF}(n, \Lambda, v, D)$ by reverting the orientation of the current.  We
denote by $\mathcal{M}^{IF/\iota}(n, \Lambda, v, D)$ the quotient
metric space obtained from $\mathcal{M}^{IF}(n, \Lambda, v, D)$ by
identifying every integral current space with its image under the
involution.

Our first main theorem is the following.
\begin{theoremalph}
\label{th:FirstMain}
The map $F : \mathcal{M}^{IF/\iota} (n, \Lambda, v, D) \to
\mathcal{M}^{GH}(n, \Lambda, v, D)$ given by
\begin{equation*}
(X, d_X, T)\mapsto X
\end{equation*}
is well-defined and is a homeomorphism. 

\noindent Additionally, for all $(X, d_X, T)
\in \mathcal{M}^{IF}(n, \Lambda, v, D)$,
\begin{enumerate}[(i)]
\item the set of positive lower-density $\set(T)$ of $T$ equals $X$,
\item the mass measure $\|T\|$ equals the Hausdorff measure $\hm^n$ on $X$,
\item the multiplicity of $T$ is equal to $1$, $\hm^n$-a.e.
\end{enumerate}
\end{theoremalph}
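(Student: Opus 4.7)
The plan is to establish the volume-theoretic properties (ii)-(iii) for any IF limit first, use them to show that $F$ is a well-defined bijection modulo the involution, and then deduce continuity in both directions from compactness of the two completions.

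The first technical step is to extend Sormani and Wenger's GH-to-IF extraction from $\Lambda = 0$ to arbitrary $\Lambda$: for any GH-convergent sequence $M_i \in \Mcal(n, \Lambda, v, D)$ with GH limit $X$, a subsequence converges in IF to an integral current space whose underlying metric space is $X$. Their argument uses Wenger's compactness to produce an IF-subsequential limit and then identifies the underlying metric space via filling-volume estimates on small balls; the only place $\Lambda = 0$ is used is Bishop-Gromov, which for general $\Lambda$ simply compares with balls in the simply connected model of constant sectional curvature $-\Lambda$. Since $\diam \le D$ all relevant radii are uniformly bounded, so this substitution yields two-sided volume control of the same form, and the rest of the argument goes through essentially verbatim.

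Given this, let $(X, d_X, T) \in \Mcal^{IF}(n, \Lambda, v, D)$ and write $(X, d_X, T) = \limif M_i$ with $M_i \in \Mcal(n, \Lambda, v, D)$; by Step 1 we may also assume $M_i \GHto X$. Colding's volume continuity under non-collapsing lower Ricci bounds gives $\hm^n(M_i) \to \hm^n(X)$, while lower semicontinuity of mass under IF convergence gives $\mass(T) \le \lim_i \hm^n(M_i) = \hm^n(X)$. The Cheeger-Colding regular set $\mathcal{R} \subset X$ has full $\hm^n$ measure, and at each $x \in \mathcal{R}$ the unique tangent cone is $\R^n$ with $\Theta^n(\hm^n, x) = 1$. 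A rescaling argument, using that each $M_i$ has mass measure equal to $\hm^n$ with multiplicity $1$, produces a tangent current of $T$ at $x$ equal to $[\R^n]$ with multiplicity $1$, so by Ambrosio-Kirchheim's density formula the multiplicity of $T$ at $x$ is exactly $1$ and $\Theta^n(\|T\|, x) = 1$. Hence $\|T\| = \hm^n$ on $\mathcal{R}$, and combining this with the total-mass inequality $\mass(T) \le \hm^n(X) = \hm^n(\mathcal{R})$ forces $\|T\|$ to assign zero measure to the singular set, yielding (ii) and (iii). Property (i) then follows from $X = \spt(\|T\|) = \spt(\hm^n) = \set(T)$, using the uniform volume lower bound of Bishop-Gromov and the definition of settledness.

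With (i)-(iii) in hand the remaining claims about $F$ are routine. Well-definedness and surjectivity follow from Step 1 applied to GH-precompact approximating sequences. Injectivity modulo $\iota$ follows because two IF limits over the same $X$ both have mass measure $\hm^n$ with multiplicity one on the dense regular set, so each is determined by a global choice of orientation, of which there are exactly two, identified under $\iota$. Continuity of $F$ and $F^{-1}$ uses that both completions are compact: a convergent sequence in one has a subsequentially convergent image in the other, and the subsequential limit is forced by the established correspondence, so the full sequence converges. The main obstacle, as I see it, is the density-multiplicity identification—specifically, showing that the tangent current at each regular point has multiplicity exactly one. This is where the non-collapsing Ricci lower bound is essentially used to transfer the multiplicity-one property from the manifolds to the limit current, via the interplay between GH tangent cones, Colding's volume convergence, and Ambrosio-Kirchheim's rectifiability theory.
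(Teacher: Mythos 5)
Your overall strategy---extend the Sormani--Wenger filling-volume argument to general $\Lambda$, identify $\|T\|$ with $\hm^n$ at regular points, then deduce that $F$ is a bijection and use compactness for continuity---is a legitimate alternative route and is, incidentally, the route the authors explicitly mention they could have taken but chose not to. The paper instead builds a degree-theory machine: it shows that the degree of an $(\epsilon,R)$-chart with respect to the pushforward current is $L^1$-stable under flat convergence (via the slicing estimates from \cite{Portegies-Properties}), and then uses Colding's volume estimate to pin the degree at $\pm 1$ on the approximating manifolds, which passes to the limit and gives the density and multiplicity bounds in one stroke. The two approaches differ genuinely: yours would re-use Perelman's Main Lemma after rescaling, the paper's avoids it entirely. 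That said, your proposal has two concrete gaps.

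First, the claimed ``rescaling argument'' for multiplicity one is not a proof; it is a restatement of what must be shown. Knowing the GH tangent cone at $p\in\mathcal{R}$ is $\R^n$ and knowing each $M_i$ carries a multiplicity-one current does \emph{not} by itself produce a tangent current of $T$ at $p$ equal to $\llbracket\R^n\rrbracket$. You would need to (a) justify that rescaled $T_i$ converge in IF to the tangent current of the limit $T$ (a diagonalization and commutation-of-limits issue that is delicate because IF convergence does not localize automatically), and (b) rule out that the tangent current is $k\llbracket\R^n\rrbracket$ for $|k|\geq 2$. Step (b) is exactly the content of Lemma~\ref{le:BoundMultiplicity} in the paper: Colding's two-sided volume estimate combined with the coarea formula shows that a $1$-Lipschitz chart is \emph{injective on a set of nearly full measure}, which is strictly more than volume convergence and is precisely what is needed to prevent multiplicity jumps in the limit. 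Without some version of this almost-injectivity you cannot close the argument.

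Second, your injectivity argument begs the question. You assert that two currents $T_1,T_2$ on the same $X$ with $\|T_1\|=\|T_2\|=\hm^n$, multiplicity one, and no boundary must be ``determined by a global choice of orientation.'' A priori the sign function $\Delta T_1/\Delta T_2\in\{-1,1\}$ could flip across $X$: the constraint $\partial(T_2\llcorner\sigma)=0$ for $\sigma:X\to\{-1,1\}$ does not obviously force $\sigma$ to be constant on a noncollapsed Ricci-limit space. Proving that it \emph{is} constant is precisely what Theorem~\ref{th:SecondMain} / part~(\ref{item:homology}) of Theorem~\ref{th:MainTheoremText} establishes, and the proof there needs the Cheeger--Colding local connectedness of $\mathcal{R}_{\epsilon,\delta}$ together with the degree argument to show the ratio $\Delta S/\Delta T$ is locally constant. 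Your plan omits this step entirely, so injectivity is not actually proved. The continuity and homeomorphism argument at the end is fine modulo these two gaps, and your remark about needing the degree/multiplicity identification as the main obstacle is accurate---you have correctly identified where the difficulty lives, but your sketch of how to resolve it does not yet do the work.
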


This theorem implies the aforementioned results by Sormani and Wenger and Munn.
Yet it also
illustrates that the currents are just going along for the ride: up to
an involution, the currents are uniquely determined by the underlying
metric spaces.

From the fact that the mass measure $\|T\|$ equals the Hausdorff measure $\hm^n$, it follows that it is the limit (in a weak sense) of the Riemannian volume measures of the manifolds in the approximating sequence. Indeed, Cheeger and Colding \cite[Theorem 5.9]{Cheeger-Colding-I} have shown that under Gromov-Hausdorff convergence, the Riemannian volume measures on the manifolds converge to the Hausdorff measure on the limit space.

We believe that the idea of the original proof by Sormani and Wenger
in \cite{Sormani-Weak}
can be used to extend their result to the case of Ricci curvature
bounded below by an arbitrary constant. 
The important
ingredients in their proof, such as the application of a volume estimate by
Colding \cite[Corollary 2.19]{Colding-Ricci} and Perelman's Main Lemma in \cite{Perelman-Manifolds}, 
are applicable to manifolds of almost nonnegative curvature, and
therefore they can be applied after scaling.  To our
knowledge, this was so far unknown.
Our proof will differ from the one by Sormani and Wenger and does not use Perelman's Main Lemma.

Our second main result is a type of local constancy theorem. 
Alternatively, it may be interpreted as stating that the
local top-dimensional homology of the space is isomorphic to
$\mathbb{Z}$.

\begin{theoremalph}
\label{th:SecondMain}
Let $(X, d_X, T) \in \mathcal{M}^{IF}(n, \Lambda, v, D)$. 
For all $q \in X$, and every integral current $S$ on $X$ such
  that $\| \partial S \|(B_t(q)) = 0$ for some $t > 0$, there exists an integer
  $k \in \mathbb{Z}$ such that $S = k T$ on $B_t(q)$.
\end{theoremalph}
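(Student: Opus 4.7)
The plan is to prove a constancy-type statement: viewing $T$ as a fundamental class on $B_t(q)$, any other integral $n$-cycle on $B_t(q)$ should be an integer multiple of it there. First, I observe that $T$ itself is a cycle on all of $X$: as an intrinsic flat limit of the fundamental currents $[[M_i]]$ of closed oriented manifolds, $\partial T = 0$. Combined with Theorem~\ref{th:FirstMain}, which gives $\|T\| = \hm^n$ and multiplicity one $\hm^n$-a.e., this makes $T$ play the role of a reference integer rectifiable $n$-current with full mass measure $\hm^n$ on $X$.

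Second, I would define a multiplicity function $k \colon B_t(q) \to \Z$, defined $\hm^n$-a.e., by blow-up. By the Cheeger--Colding structure theory for noncollapsed Ricci limit spaces, at $\hm^n$-almost every $x \in X$ the tangent cone is $\R^n$. At such a regular point the blow-up of $T$ is the standard current $\pm[[\R^n]]$ (we may fix orientations so this sign is $+1$), while the blow-up of $S$ is an integer rectifiable $n$-current on $\R^n$ whose boundary vanishes, since $\partial S = 0$ on a full neighborhood of $x$. The Federer--Fleming constancy theorem on $\R^n$ then identifies this blow-up with $k(x)[[\R^n]]$ for a unique integer $k(x)$. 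Uniqueness of rectifiable representation (agreeing set, multiplicity, and orientation $\hm^n$-a.e.) then yields
\begin{equation*}
S \;=\; \sum_{j \in \Z} j \, T \llcorner E_j \quad \text{on } B_t(q), \qquad E_j := \{x \in B_t(q) : k(x) = j\}.
\end{equation*}

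The final step, which is the main obstacle, is to use $\|\partial S\|(B_t(q)) = 0$ to force $k$ to be $\hm^n$-a.e. constant on $B_t(q)$. I would argue by slicing: for $p \in B_t(q)$ and almost every $r$ in an appropriate range, the slice $\langle S, d_p, r\rangle$ equals $\partial(S \llcorner B_r(p))$, since $\partial S \llcorner B_r(p) = 0$. Using the decomposition above together with $\partial T = 0$, this slice is concentrated on the essential boundaries of the sets $E_j$ inside the sphere $\{d_p = r\}$. If two distinct $E_j$ had positive $\hm^n$-measure, then for an appropriate $p$ and $r$ these contributions would fail to cancel, producing a nonzero slice and contradicting $\partial S = 0$. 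Since balls in a geodesic space are connected, $k$ would then have to be globally constant on $B_t(q)$, completing the proof. The core difficulty is that this is essentially a constancy theorem for $n$-cycles on $X$ relative to $T$: in Euclidean space it is immediate from Federer--Fleming, but on the noncollapsed Ricci limit $X$ it requires combining the Euclidean structure of generic tangent cones with a slicing argument that accommodates the possibly irregular level-set structure of distance functions.
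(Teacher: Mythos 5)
There is a genuine gap, and it sits precisely where you flagged the main obstacle. The proposed slicing argument in the last paragraph does not close. For a.e.\ $r$, the slice $\langle S, d_p, r\rangle = \partial(S\llcorner B_r(p))$ is a nonzero $(n-1)$-current concentrated on the metric sphere $\{d_p = r\}$, and it is nonzero \emph{regardless} of whether $k$ is constant; it is not concentrated on essential boundaries of the $E_j$, and its being nonzero does not contradict $\|\partial S\|(B_t(q)) = 0$. To make the contradiction work you would need, in effect, a constancy theorem for BV-type integer-valued functions on the singular space $X$; the slicing along distance functions does not by itself detect a jump of $k$ across a hypersurface.

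More substantively, two ingredients the paper uses are absent from your outline. First, local constancy: the paper shows that near a regular point $p$, the ratio $\Delta S/\Delta T$ is forced to agree with the (constant) ratio $\deg(S\llcorner B_r(p),\Phi,\cdot)/\deg(T\llcorner B_r(p),\Phi,\cdot)$ on a subset of $B_r(p)$ of $\|T\|$-measure at least $(\omega_n - c\eta^n)r^n$, where $\Phi$ is a Gromov--Hausdorff approximation whose degrees are constant on a slightly smaller Euclidean ball by the Colding volume estimate. This is what gives you a genuinely \emph{local} statement that $k$ is a.e.\ constant on small balls around regular points. Second, and crucially, the passage from local to global constancy: topological connectedness of $B_t(q)$ (as you invoke) is not enough, since $k$ is only defined $\hm^n$-a.e.\ and the singular set could a priori disconnect the regular set. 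The paper invokes the Cheeger--Colding local path-connectedness result \cite[Cor.~3.9, 3.10]{Cheeger-Colding-II}: any two points of $B_t(q)$ can be joined inside $\mathcal{R}_{\epsilon/R, \delta R}\cap B_t(q)$ for suitable $\delta$. This, combined with the local constancy and the continuity of the averaged ratio $(\Delta S/\Delta T)_{p,r}$ on that set, is what forces $k$ to be globally constant. Without both of these your argument cannot close. (A minor remark: the a.e.-defined density $k$ is more cleanly obtained directly from the Ambrosio--Kirchheim representation theorem, $S = T\llcorner(\Delta S/\Delta T)$, than from a blow-up argument, but that is a detour, not a gap.)
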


The structure of the manuscript is as follows. 
Section~\ref{se:Background} gives a coarse background on integral
currents in the sense of Ambrosio and Kirchheim
\cite{Ambrosio-Currents}, integral current spaces and intrinsic flat
converge as introduced by Sormani and Wenger \cite{Sormani-Intrinsic}
and some of the elements we need from the theory on the structure of
spaces with Ricci curvature bounded below by Cheeger and Colding
\cite{Cheeger-Colding-I, Cheeger-Colding-II}. In our notation, we
generally try to stick to the notation in these articles.

Theorems~\ref{th:FirstMain} and \ref{th:SecondMain} are direct consequences of Theorem~\ref{th:MainTheoremText} in the text.  A crucial ingredient is a link
between zero-dimensional slices and the degree, which we will explain
in Section~\ref{se:Degree}.  Earlier work by Sormani and the second
author \cite{Portegies-Properties} showed that integrals of the flat
distance between lower-dimensional slices of currents can be
controlled by the flat distances between the original currents. This
will imply that the $L^1$-distance between their degrees can be
controlled locally.  In Section~\ref{se:ColdingVolume}, we show that
Colding's volume estimate easily translates into an estimate on the
degree for manifolds $M \in \mathcal{M}(n, \Lambda, v, D)$.  In
Section~\ref{se:MainTheorem} we combine these two ingredients to give
a proof of the main theorem.

\section*{Acknowledgments}

The authors would like to thank the Max Planck Institute for Mathematics in the Sciences for its hospitality and support. The second author would like to thank Raquel Perales for discussions on the results in \cite{Li-Perales}.

\section{Background}
\label{se:Background}
 
In this section, we review integral currents on metric spaces as introduced by Ambrosio and Kirchheim \cite{Ambrosio-Currents}, the intrinsic flat distance introduced by Sormani and Wenger \cite{Sormani-Intrinsic} and some theory on the structure of spaces with Ricci curvature bounded below by Cheeger and Colding \cite{Cheeger-Colding-I, Cheeger-Colding-II}. The prime purpose of this review is to fix notation. We adhere closely to the notation used in these articles, and therefore the reader familiar with these works could probably understand the rest of the manuscript without reading this section.

\subsection{Currents}

Let $X$ be a complete metric space.  For $n \geq 1$, we define the set
$\mathcal{D}^n(X)$ of all $(n+1)$-tuples $(f, \pi_1, \dots, \pi_n)$ of
Lipschitz functions on $X$, where additionally $f$ is required to be
bounded. For $n = 0$, we define $\mathcal{D}^0(X)$ as the set of
bounded Lipschitz functions.  It can be helpful to think of an element
$(f, \pi_1, \dots, \pi_n) \in \mathcal{D}^{n+1}$ as an $n$-form $f d
\pi_1 \wedge \dots \wedge d\pi_n$.

An $n$-dimensional metric functional is a function $T: \mathcal{D}^n(X) \to \mathbb{R}$ such that the map
\begin{equation}
(f, \pi_1, \dots, \pi_n) \mapsto T(f, \pi_1, \dots, \pi_n)
\end{equation}
is subadditive, and positively $1$-homogeneous with respect to the functions $f$ and $\pi_1, \dots, \pi_n$. 
We denote the vector space of $n$-dimensional metric functionals on $X$ by $MF_n(X)$.

The exterior differential $d$ maps $\mathcal{D}^n(X)$ into $\mathcal{D}^{n+1}(X)$ according to
\begin{equation}
d(f, \pi_1, \dots, \pi_n) := (1, f, \pi_1, \dots, \pi_n).
\end{equation}
For $n \geq 1$, the boundary of $T \in MF_n(X)$, is the $(n-1)$-dimensional metric functional denoted by $\partial T$ defined by
\begin{equation}
\partial T (\omega) := T (d \omega), \qquad \omega \in \mathcal{D}^{n-1}(X).
\end{equation}

If $Y$ is another complete metric space, and $\Phi: X \to Y$ is Lipschitz, we define the pullback operator that maps $\mathcal{D}^n(Y)$ to $\mathcal{D}^n(X)$ by
\begin{equation}
\Phi^\# (f, \pi_1, \dots, \pi_n) = (f \circ \Phi, \pi_1 \circ \Phi, \dots, \pi_k \circ \Phi).
\end{equation}
We define the pushforward $\Phi_\# T \in MF_n(Y)$ of $T \in MF_n(X)$ by 
\begin{equation}
\Phi_\# T (\omega) := T(\Phi^\# \omega), \qquad \omega \in \mathcal{D}^n(Y).
\end{equation}

For $T \in MF_n(X)$ and $\omega = (g, \tau_1, \dots, \tau_k) \in \mathcal{D}^n(X)$, with $k \leq n$, we define the restriction $T \llcorner \omega \in MF_{n-k}(X)$ by
\begin{equation}
T \llcorner \omega (f, \pi_1, \dots, \pi_{n-k}) := T(f g, \tau_1, \dots, \tau_k, \pi_1, \dots, \pi_{n-k}).
\end{equation}
We say that $T \in MF_n(X)$ has finite mass if there exists a finite Borel measure $\mu$ on $X$ such that for all $(f, \pi_1, \dots,\pi_n) \in \mathcal{D}^n(X)$,
\begin{equation}
|T(f, \pi_1, \dots, \pi_n)| \leq \prod_{i=1}^n \Lip(\pi_i) \int_X |f| \, d\mu,
\end{equation}
where $\Lip(\pi_i)$ denotes the Lipschitz constant of $\pi_i$. 
Moreover, the minimal measure satisfying this bound is called the mass of $T$ and is denoted by $\|T\|$.
When $T$ has finite mass, it can be uniquely extended to a function on $(n+1)$-tuples $(f, \pi_1, \dots, \pi_n)$ for which $f$ is merely bounded Borel, and $\pi_1, \dots, \pi_n$ are Lipschitz.

An $n$-dimensional current $T$ is an $n$-dimensional metric functional with additional properties. 
From the definition by Ambrosio and Kirchheim \cite[Definition 3.1]{Ambrosio-Currents}, immediately stronger properties may be derived. 
We choose to only phrase the stronger properties. The space of $n$-dimensional currents forms a Banach space, with respect to the mass norm $\mass(T) = \|T\|(X)$. We denote the Banach space by $\mass_n(X)$.
Every $T \in \mass_n(X)$ satisfies
\begin{enumerate}[(i)]
\item $T$ is multilinear in $(f, \pi_1, \dots, \pi_n)$, and whenever $f$ and $\pi_1$ are both bounded and Lipschitz, 
\begin{equation*}
T(f, \pi_1, \dots, \pi_n) + T( \pi_1 , f, \dots, \pi_n) = T( 1, f\pi_1, \dots, \pi_n),
\end{equation*}
and
\begin{equation*}
T( f , \psi_1( \pi), \dots, \psi_n(\pi)) = T( f \det \nabla \psi(\pi) , \pi_1, \dots, \pi_n),
\end{equation*}
where $\psi = (\psi_1, \dots, \psi_n) \in [C^1(\R^n)]^n$ and $\nabla \psi$ is bounded;
\item The following continuity property is satisfied
\begin{equation*}
\lim_{i \to \infty} T(f^i, \pi_1^i, \dots, \pi_k^i) = T(f, \pi_1, \dots, \pi_k)
\end{equation*}
whenever $f^i - f \to 0$ in $L^1(X, \|T\|)$ and $\pi_j^i \to \pi_j$ pointwise in $X$ with uniformly bounded Lipschitz constant $\Lip(\pi_j^i) \leq C$;
\item The following locaility property holds: $T(f, \pi_1, \dots, \pi_n) = 0$ if $\{ f \neq 0\} = \cup_i B_i$ where $B_i$ are Borel and $\pi_i$ is constant on $B_i$.
\end{enumerate}
We say that a sequence of currents $T_i \in \mass_n(X)$ converges weakly to $T \in \mass_n(X)$ if for all $\omega \in \mathcal{D}^n(X)$,
\begin{equation}
\lim_{i \to \infty} T_i (\omega) = T( \omega).
\end{equation}
The mass of open sets is lower-semicontinuous under weak convergence, that is for $O \subset X$ open, and $T_i$ converging weakly to $T$,
\begin{equation}
\liminf_{i\to\infty} \|T_i\|(O) \geq \|T\|(O).
\end{equation}

A very important example of an $n$-dimensional current on the Euclidean space $\R^n$ is given by the current induced by a function $g \in L^1(\R^n)$ which we denote by $\llbracket g \rrbracket$ and is defined by
\begin{equation}
\llbracket g \rrbracket ( f, \pi_1, \dots, \pi_n) := \int_{\R^n} g f d\pi_1 \wedge \dots \wedge d\pi_n = \int_{\R^n} g f \det(\nabla \pi) \, dx.
\end{equation}
We say that a current $T \in \mass_n(X)$ is normal if $\partial T \in \mass_{n-1}(X)$. 

A subset $S \subset X$ is called countably $\hm^n$-rectifiable if there are compact sets $K_i \subset \R^n$ and Lipschitz functions $f_i: K_i \to X$ such that 
\begin{equation}
\hm^n\left(  S \backslash \cup_{i=1}^\infty f_i(K_i) \right) = 0.
\end{equation}
We say $T \in \mass_n(X)$ is rectifiable if $\|T\|$ is concentrated on a countably $\hm^n$-rectifiable set and vanishes on $\hm^n$-negligible Borel sets. We call $T$ integer rectifiable if for all $\phi \in \Lip(X, \R^n)$ and all open $O \subset X$ it holds that $\phi_\#(T\llcorner O) = \llbracket \theta\rrbracket$ for some $\theta \in L^1(\R^n, \mathbb{Z})$. Finally, the collection of integral currents will consist of all integer rectifiable currents that are also normal. We will denote this collection by $I_n(X)$ and in this manuscript, we will only deal with integral currents.

We denote by $\omega_n$ the (Lebesgue) volume of the unit ball in $\R^n$.
For a Borel measure $\mu$ on $X$, we define respectively the $n$-dimensional lower and upper density of $\mu$ in $x\in X$ by
\begin{equation}
\Theta_{n*} (\mu, x) := \liminf_{r \downarrow 0} \frac{\mu(B_r(x))}{\omega_n r^n}, 
\qquad \Theta_n^*(\mu, x) := \limsup_{r \downarrow 0} \frac{\mu(B_r(x))}{\omega_n r^n}.
\end{equation}
If these values coincide, we call the common value the $n$-dimensional density of $\mu$ in $x$ and we denote it by $\Theta_n(\mu,x)$.
We define $\set(T) \subset X$ as
\begin{equation}
\set(T) := \{ x \in X \, | \, \Theta_{n*}(\|T\|, x) > 0 \}.
\end{equation}
For an integer rectifiable current $T$, the mass $\|T\|$ is always concentrated on $\set(T)$ and $\set(T)$ is rectifiable.

Integer rectifiable currents allow for a parametric representation. 
It is a simple consequence of Lusin's theorem and \cite[Theorem 4.5]{Ambrosio-Currents} that if $T$ is an $n$-dimensional integer rectifiable current, there exist a sequence of compact sets $K_i$, numbers $\theta_i \in \mathbb{N}$ and bi-Lipschitz functions $f_i : K_i \to E$ such that $f_i(K_i) \cap f_j(K_j) = \emptyset$ for $i \neq j$, and
\begin{equation}
\label{eq:parametric}
T = \sum_{i = 1}^\infty \theta_i {f_i}_\# \llbracket \chi_{K_i} \rrbracket \quad \text{and} \quad \sum_{i=1}^\infty \theta_i \mass({f_i}_\# \llbracket \chi_{K_i} \rrbracket ) = \mass(T).
\end{equation}

An $n$-dimensional oriented Riemannian manifold $M$ naturally induces a current (on its geodesic metric space), that we will also denote by $\llbracket M \rrbracket$, given by integration of $\omega \in \mathcal{D}^{n}$ over $M$,
\begin{equation}
\llbracket M \rrbracket (\omega) = \int_M \omega = \int_M \langle \omega, \tau \rangle \, d\hm^n,
\end{equation}
where $\tau$ is a (unit) orienting $n$-vector field. In this case, the mass of $\llbracket M \rrbracket$ equals the Riemannian volume.

The intrinsic representation of rectifiable currents by Ambrosio and Kirchheim \cite[Theorem 9.1]{Ambrosio-Currents} shows that at least in some sense, this formula holds for any integer rectifiable current. 
More precisely, if $Z$ is a $w^*$-separable dual space (i.e. $Z = G^*$ for a separable Banach space $G$), and $T$ is an integer rectifiable current on $Z$, then there exists a countably $\hm^n$-rectifiable set $Y\subset Z$, a Borel function $\theta_T: Y \to \mathbb{N}$ (which we call the multiplicity of $T$) with $\int_Y \theta_T \, d\hm^n < \infty$ and an orientation $\tau$ of $Y$ such that
\begin{equation}
\label{eq:RepresentationCurrent}
T(f , \pi_1, \dots, \pi_n) = \int_Y f(z) \theta_T(z) \langle d\pi_1 \wedge \dots \wedge d\pi_n, \tau \rangle \, d\hm^n(z),
\end{equation}
for $(f, \pi_1, \dots, \pi_n) \in \mathcal{D}^n(Z)$. 
We sometimes write $T = \llbracket Y, \theta_T, \tau \rrbracket$. 
The multiplicity $\theta_T$ corresponds to the $\theta_i$ in the parametric representation (\ref{eq:parametric}), in the sense that for $\|T\|$-a.e. $x \in f_i(K_i)$, $\theta(x) = \theta_i$.

Moreover, the mass of $T$ satisfies
\begin{equation}
\|T\| = \lambda \theta_T \hm^n \llcorner Y,
\end{equation}
for a Borel function $\lambda: Y \to [c(n), C(n)]$ (the area factor) that is bounded away from zero and infinity by constants that only depend on the dimension. 

It is true that we have not defined the objects appearing in (\ref{eq:RepresentationCurrent}). 
For a precise definition and formulation see \cite{Ambrosio-Currents}. 
For the purpose of the paper we just would like to stress the analogy with the formula for a current induced by a Riemannian manifold.

Additionally, the representation formula makes clear that if there is another current $S$ supported on a subset of $Y$, it holds that $S = T \llcorner (b / \theta_T)$ for a Borel function $b:Y \to \mathbb{Z}$. 
We will denote the ratio $(b/ \theta_T)$ by $\Delta S / \Delta T$.

On $Y$, a version of the Lebesgue differentiation theorem is still valid. 
By \cite[Theorem 5.4]{Ambrosio-Rectifiable} and the remark following it, if $A \subset Y$ is Borel, then for $\hm^n$-a.e. $x \notin A$,
\begin{equation}
\Theta_n( \| T \llcorner \, A  \| , x ) = \Theta_n(\|T\|\llcorner A, x) =  0,
\end{equation}
while on the other hand for every Borel function $g:Y \to \R$, for $\hm^n$-a.e. $x \in Y$,
\begin{equation}
\Theta_n(\|T \llcorner g\|, x) = \lambda(x) \theta_T(x) g(x).
\end{equation}

A very useful technique in dealing with currents on metric spaces is called slicing. 
For the purpose of this manuscript, we only need zero-dimensional slices. 
For $T \in I_n(X)$, a Lipschitz map $\Phi: X \to \R^n$ and points $x \in \R^n$, the slices $\langle T, \Phi, x \rangle \in I_0(X)$ are characterized by the property
\begin{equation}
\int_{\R^n} \langle T, \Phi, x \rangle \psi(x) \, dx = T \llcorner (\psi \circ \Phi) \, d \Phi, \qquad \text{for all } \psi \in C_c(\R^n),
\end{equation}
from which it follows that for any bounded Borel function $f$ on $X$,
\begin{equation}
\int_{\R^n} \langle T, \Phi, x \rangle (f) \, dx = T (f d\Phi).
\end{equation}
By \cite[Theorem 9.7]{Ambrosio-Currents}, if $X$ is in addition a $w^*$-separable dual space, and $T= \llbracket Y, \theta_T, \tau \rrbracket$, for a rectifiable set $Y \subset X$, the slices are very easy to interpret. Indeed, for $\lm^n$-a.e. $x \in \R^n$, $\Phi^{-1}(x)\cap Y$ contains at most finitely many points and
\begin{equation}
\langle T, \Phi, x \rangle = \sum_{p \in \Phi^{-1}(x)\cap Y} a_p \theta_T(p) \delta_p, 
\end{equation} 
for some choice of $a_p \in \{ - 1, 1 \}$. 
Finally, for instance by \cite[Theorem 5.7]{Ambrosio-Currents} if $S \in I_n(Z)$, with $\hm^n(\set(T) \backslash \set(S)) = 0$, for $\lm^n$-a.e. $x \in \R^n$, 
\begin{equation}
\langle S, \Phi, x \rangle = \langle T, \Phi, x \rangle \llcorner \frac{\Delta S}{\Delta T}.
\end{equation}

Finally, we note that a separable space $Y$ can always be isometrically embedded into a $w^*$-separable Banach space. 
Indeed, if $y_i$ ($i=1, 2, \dots$) is a dense sequence in $Y$, we may embed $Y$ into the $w^*$-separable Banach space $L^\infty(\{ y_i \}_i)$ by a Kuratowski embedding $I: Y \to L^\infty(\{y_i\}_i)$ given by
\begin{equation}
(I(y))(y_i) := d(y, y_i) - d(y, y_1).
\end{equation}

\subsection{Integral current spaces and intrinsic flat convergence}

Let $S, T \in I_n(X)$.
The flat distance between $S$ and $T$ in $X$ is defined as
\begin{equation}
d_F^X(S, T) := \inf\{ \mass(U) + \mass(V) \, | \, S - T = U + \partial V, \, U \in I_n(X), \, V \in I_{n+1}(X) \}.
\end{equation}

As briefly mentioned in the introduction, an $n$-dimensional integral current space $(X, d_X, T)$ is a pair of a metric space $(X, d_X)$, which is not necessarily complete, and a current $ T \in I_n(\bar{X})$ on the completion of $X$. 
Additionally, by convention, it is assumed that the current is completely settled, that is $X = \set(T)$. 

The intrinsic flat distance between two integral current spaces $(X, d_X, T)$ and $(Y, d_Y, S)$ is given by 
\begin{equation}
\begin{split}
d_{\mathcal{F}}((X, d_X, T), (Y, d_Y, S)) 
&:= \inf \Big\{ d_F^Z(\phi_\# T, \psi_\# S ) \, | \, Z \text{ complete metric space} \\
&\qquad \qquad \phi: X \to Z, \psi: Y \to Z \text{ isometric} \Big\}
\end{split}
\end{equation}
If $d_\mathcal{F}( (X, d_X, T), (Y, d_Y, S)) = 0$, this implies that there exists a current-preserving isometry $\phi: X \to Y$, that is an isometry such that $\phi_\# T = S$. 
Note that without the convention $X = \set(T)$ (or a similar condition), this would certainly not be the case in general. 

We will denote the metric space of (equivalence classes of) $n$-dimensional integral current spaces with the intrinsic flat distance by $\mathcal{M}_n^{IF}$.

There is an involution $\iota$ acting on $\mathcal{M}^{IF}_n$, given by 
\begin{equation}
\iota ( X, d_X, T) := (X, d_X, - T).
\end{equation}
We may endow the quotient space $\mathcal{M}^{IF}_n/\iota$ by the quotient distance:
\begin{equation}
\begin{split}
d_{\mathcal{F}/\iota} (M, N) :=  \inf\{ d_\mathcal{F} (M, \iota^{\epsilon_1} M_1) &+ d_\mathcal{F}(M_1, \iota^{\epsilon_2} M_2) + \dots + d_\mathcal{F}(M_{n-1},\iota^{\epsilon_n} N) \\
& \, | \, M_i \in \mathcal{M}^{IF}_n, \epsilon_i \in \{0,1\} \}
\end{split}
\end{equation}
In general, such a definition only yields a pseudometric. However, in this special case, the quotient distance is indeed a distance, and is in fact given by
\begin{equation}
d_{\mathcal{F}/\iota} (M_1, M_2 ) = \min( d_\mathcal{F}(M_1, M_2) , d_\mathcal{F}(M_1, \iota(M_2)) ).
\end{equation}
We denote the metric space $\mathcal{M}_n^{IF}/\iota$ endowed with this quotient metric by $\mathcal{M}_n^{IF/\iota}$.

\subsection{The structure of spaces with Ricci curvature bounded below}
\label{suse:CheegerColding}

In \cite{Cheeger-Colding-I, Cheeger-Colding-II, Cheeger-Colding-III}, Cheeger and Colding study the structure of metric spaces that arise as the Gromov-Hausdorff limits of manifolds with Ricci curvature uniformly bounded from below. 

Cheeger and Colding consider both noncollapsed and collapsed limit spaces. 
From the point of view of intrinsic flat convergence, the collapsed case is trivial as in that case the approximating sequence of Riemannian manifolds converges in the intrinsic flat distance to the zero integral current space. 
We therefore consider only noncollapsed limit spaces, for which the results by Cheeger and Colding are much stronger. 

We borrow the following definitions. 
If $X \in \mathcal{M}^{GH}(n, \Lambda, v, D)$, and $p \in X$, we say $p \in \mathcal{R}_{\epsilon, \delta}$ if and only if
\begin{equation}
d_{GH}(B_r(p), B_r(0) ) < \epsilon r, \quad \text{for all } r < \delta.
\end{equation}
We further define the regular set $\mathcal{R}$ by
\begin{equation}
\mathcal{R} := \bigcap_{\epsilon > 0} \bigcup_{\delta > 0} \mathcal{R}_{\epsilon, \delta}.
\end{equation}
In words, $p \in X$ is regular if for all $\epsilon > 0$ there exists a $\delta > 0$ such that $B_r(p)$ is $\epsilon$-close to a Euclidean ball at every scale smaller than $\delta$.

Cheeger and Colding show that in the noncollapsed case, the Hausdorff codimension of the complement $X \backslash \mathcal{R}$ is at least $2$.
They use this to prove in \cite[Corollary 3.9 and 3.10]{Cheeger-Colding-II} a (local) connectedness result. 
More precisely, for all $q_1, q_2 \in \mathcal{R}$, and every $\epsilon, \sigma > 0$, there is a $\delta > 0$ such that there is a path in $\mathcal{R}_{\epsilon, \delta}$ of length smaller than $d(q_1, q_2) + \sigma$ connecting $q_1$ and $q_2$. 
It follows immediately that when $q \in X$, $q_1, q_2 \in B_t(q) \cap \mathcal{R}$, for every $\epsilon > 0$ there is a path in $\mathcal{R}_{\epsilon, \delta}$ connecting $q_1$ and $q_2$ that remains inside $B_t(q)$. 
Indeed, there is a $\sigma > 0$ such that $q_1, q_2 \in B_{t- 3\sigma}(q)$. 
Next, there exists a $q_3 \in \mathcal{R}$ with $d(q_3, q) < \sigma$ by the Bishop-Gromov estimate and the fact that $\mathcal{R}$ has full measure. 
By the above, there exists a $\delta > 0$ such that there are paths in $\mathcal{R}_{\epsilon, \delta}$ from $q_1$ to $q_3$ and from $q_3$ to $q_2$ respectively, both of length smaller than $t - \sigma$, as for $k = 1, 2$,
\begin{equation}
d(q_k, q_3) \leq d(q_k , q) + d(q, q_3) < t - 3 \sigma + \sigma = t - 2\sigma. 
\end{equation}
These paths are contained in $B_t(q)$ and by concatenating them we obtain a path from $q_1$ to $q_3$ in $B_t(q) \cap \mathcal{R}_{\epsilon, \delta}$. 

\section{Degree estimate}
\label{se:Degree}

For an integral current $T \in I_n(X)$ on a complete metric space $X$
and a Lipschitz $\Psi: X \to \R^n$ the pushforward 
$\Psi_\# T \in I_n(\R^n)$ is represented by a unique BV
function, by a
representation theorem due to Ambrosio and Kirchheim 
\cite[Theorem 3.7]{Ambrosio-Currents}. The theorem also ensures
that the mass measure of the boundary of $\Psi_\# T$ equals the total
variation of the distributional derivative of the representing
function.
We call such a function {\it the degree} of $\Psi$ with respect to $T$.

\begin{definition}
Let $X$ be a complete metric space and let $T \in I_n(X)$. 
Let $\Psi:X \to \R^n$ be a Lipschitz map. 
We define the degree of $\Psi$ with respect to $T$ as the (unique) function $\deg(T, \Psi , .) \in \mathrm{BV}(\R^n)$, taking values in $\mathbb{Z}$, that satisfies
\begin{equation*}
\Psi_\# T = \llbracket \deg(T,\Psi,.) \rrbracket.
\end{equation*}
\end{definition}

If $T$ is a current induced by an oriented Riemannian manifold, then the degree defined above indeed corresponds to the usual topological degree. 

By the Ambrosio-Kirchheim slicing theorem \cite[Theorem 5.7]{Ambrosio-Currents} and \cite[Eq. (5.18)]{Ambrosio-Currents} the degree can be evaluated by
\begin{equation}
\label{eq:DegreeEqualities}
\begin{split}
\deg( T, \Psi, x) &= \langle \Psi_\# T, \mathrm{Id}, x \rangle (1) \\
&= \Psi_\# \langle T, \Psi, x \rangle (1) \\
&= \langle T, \Psi, x \rangle(1).
\end{split}
\end{equation}
This observation is very useful in light of the estimates by Sormani and the second author that show that integrals of flat distances between slices of two currents can be controlled by the flat distance between the full currents. 
This translates to an $L^1$ estimate on the difference between the degrees. 

\begin{lemma}
\label{le:L1DegConv}
Let $Z$ be a complete metric space and let $T^k \in I_n(Z)$, $k=1,2$.
Let $\Phi^k: Z \to \R^n$ be such that every component $\Phi^k_j$ is $1$-Lipschitz. 
Then
\begin{equation}
\label{eq:L1IneqDegree}
\begin{split}
\| \deg( T^1, \Phi^1, .)& - \deg(T^2, \Phi^2, .) \|_{L^1(\R^n)} 
\leq d_F( T^1 , T^2 )\\
& \quad 
+ 2 \sum_{j=1}^n \|\Phi^1_j - \Phi^2_j\|_\infty \left( \mass(T^2) + \mass(\partial T^2) \right).
\end{split}
\end{equation}
\end{lemma}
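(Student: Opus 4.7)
The plan is to first rewrite the $L^1$-norm on the left as a mass on $\R^n$, using $\mass(\llbracket g \rrbracket) = \|g\|_{L^1(\R^n)}$ for $g \in L^1(\R^n, \Z)$. This gives $\|\deg(T^1, \Phi^1, \cdot) - \deg(T^2, \Phi^2, \cdot)\|_{L^1(\R^n)} = \mass(\Phi^1_\# T^1 - \Phi^2_\# T^2)$. Equivalently, using (\ref{eq:DegreeEqualities}) together with the elementary inequality $|S_1(1) - S_2(1)| \leq d_F(S_1, S_2)$ for $0$-currents (immediate from any flat decomposition $S_1 - S_2 = U + \partial V$, which forces $S_1(1) - S_2(1) = U(1)$), I obtain the pointwise bound $|\deg(T^1, \Phi^1, x) - \deg(T^2, \Phi^2, x)| \leq d_F(\langle T^1, \Phi^1, x \rangle, \langle T^2, \Phi^2, x \rangle)$, and integrating while invoking the slicing flat estimate of \cite{Portegies-Properties} would give the lemma directly --- this is the path suggested in the discussion preceding the statement.

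For a more self-contained proof, I apply the triangle inequality to split $\mass(\Phi^1_\# T^1 - \Phi^2_\# T^2)$ into a ``change of current'' piece $\mass(\Phi^1_\#(T^1 - T^2))$ and a ``change of map'' piece $\mass(\Phi^1_\# T^2 - \Phi^2_\# T^2)$. The key common observation is $I_{n+1}(\R^n) = \{0\}$: any integer rectifiable $(n+1)$-current in $\R^n$ has its mass concentrated on an $\hm^{n+1}$-rectifiable subset of $\R^n$, which carries no $\hm^{n+1}$ measure.

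For the first piece, given $\epsilon > 0$ I pick $U \in I_n(Z)$ and $V \in I_{n+1}(Z)$ with $T^1 - T^2 = U + \partial V$ and $\mass(U) + \mass(V) \leq d_F(T^1, T^2) + \epsilon$. Since $\Phi^1_\# V \in I_{n+1}(\R^n) = \{0\}$, we have $\Phi^1_\#(T^1 - T^2) = \Phi^1_\# U$, and the pushforward mass bound together with the 1-Lipschitz hypothesis on components gives $\mass(\Phi^1_\# U) \leq \mass(U) \leq d_F(T^1, T^2) + \epsilon$. For the second piece, I use the affine homotopy $H \colon [0,1] \times Z \to \R^n$ defined by $H(t, z) := (1-t) \Phi^1(z) + t \Phi^2(z)$ and the standard homotopy identity
\[
\Phi^2_\# T^2 - \Phi^1_\# T^2 = \partial H_\#(\llbracket [0,1] \rrbracket \times T^2) + H_\#(\llbracket [0,1] \rrbracket \times \partial T^2).
\]
The first right-hand term vanishes because $H_\#(\llbracket [0,1] \rrbracket \times T^2) \in I_{n+1}(\R^n) = \{0\}$, reducing the task to estimating $\mass(H_\#(\llbracket [0,1] \rrbracket \times \partial T^2))$.

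The main obstacle is this last mass bound. The heuristic Jacobian picture is transparent: $\partial_t H = \Phi^2 - \Phi^1$ has $j$-th coordinate bounded by $\|\Phi^1_j - \Phi^2_j\|_\infty$ while the spatial components of $H$ are 1-Lipschitz, so expanding the $n$-Jacobian along the $t$-row and using that wedging with $e_j$ kills the $j$-th spatial column gives a pointwise bound by $\sum_j \|\Phi^1_j - \Phi^2_j\|_\infty$. To make this rigorous, I use the parametric representation \eqref{eq:parametric} to write $\partial T^2 = \sum_i \theta_i (f_i)_\# \llbracket \chi_{K_i} \rrbracket$ with compact $K_i \subset \R^{n-1}$ and bi-Lipschitz $f_i$; each summand of $H_\#(\llbracket [0,1] \rrbracket \times \partial T^2)$ is then a classical Euclidean pushforward from $[0,1] \times K_i \subset \R^n$, and expanding the resulting ordinary determinant along the $t$-row while bounding each $(n-1)$-minor by $1$ via Hadamard and the 1-Lipschitz hypothesis yields $\mass(H_\#(\llbracket [0,1] \rrbracket \times \partial T^2)) \leq \sum_j \|\Phi^1_j - \Phi^2_j\|_\infty \mass(\partial T^2)$. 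Since $\mass(\partial T^2) \leq \mass(T^2) + \mass(\partial T^2)$, the lemma's bound follows with a factor $2$ of slack.
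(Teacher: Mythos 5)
Your first paragraph reproduces the paper's own proof exactly: invoke the slicing estimate from \cite[Proposition 4.17]{Portegies-Properties}, observe $\deg(T^k,\Phi^k,x) = \langle T^k,\Phi^k,x\rangle(1)$ and that evaluating a zero-current on $1$ is $1$-Lipschitz for $d_F$, then integrate. That part is correct and identical to the paper's argument.

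The self-contained alternative is a genuinely different route: rather than treating the slicing estimate as a black box, you rewrite the $L^1$-norm as $\mass(\Phi^1_\# T^1 - \Phi^2_\# T^2)$, split by the triangle inequality, and handle both pieces by pushing the flat decomposition and the affine homotopy formula into $\R^n$, where the key simplification $I_{n+1}(\R^n)=\{0\}$ kills the top-dimensional filling terms. This is conceptually clean, and in effect re-derives the relevant special case of \cite[Proposition 4.17]{Portegies-Properties} instead of citing it. However, there are two places where the mass bounds are asserted rather than proved, and one of them contains an error. First, the inequality $\mass(\Phi^1_\# U) \le \mass(U)$ does not follow from ``the pushforward mass bound'' (the naive bound $\|\Phi_\#U\|\le(\Lip\Phi)^n\Phi_\#\|U\|$ only gives $n^{n/2}\mass(U)$, since $\Lip\Phi^1\le\sqrt n$); you need the slicing route, namely $\mass(\Phi^1_\#U)=\int|\langle U,\Phi^1,x\rangle(1)|\,dx\le\int\mass(\langle U,\Phi^1,x\rangle)\,dx=\mass(U\llcorner d\Phi^1)\le\mass(U)$, which uses that each coordinate is $1$-Lipschitz. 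Second, and more seriously, the Hadamard step for the homotopy term is wrong as stated. After pulling back through the parametric representation, the $(n-1)$-minor $M_{j1}$ in the Laplace expansion along the $t$-column is $\det D\bigl(\hat H^j_t\circ f_i\bigr)(y)$, whose rows have Euclidean norm bounded by $\Lip(f_i)$, \emph{not} by $1$ --- the maps $f_i$ from the parametric representation are only bi-Lipschitz and may have arbitrarily large Lipschitz constant. Hadamard therefore gives $|M_{j1}|\le\Lip(f_i)^{n-1}$, which does not close the estimate. To repair this, you would need to control $\int_{K_i}|M_{j1}|\,d\lm^{n-1}$ by $\mass\bigl((f_i)_\#\llbracket\chi_{K_i}\rrbracket\bigr)$ using the metric area formula (i.e.\ bound the rows by the metric differential $\md f_i(y)$ and relate the resulting determinant bound to the metric Jacobian $J f_i(y)$), which is a genuinely more delicate argument than a pointwise Hadamard estimate; the area factor of the rectifiable set enters here. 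The overall structure is sound and the final claimed bound is plausible (you even have a factor of two to spare), but as written the key mass estimate for $H_\#(\llbracket[0,1]\rrbracket\times\partial T^2)$ is not justified.
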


\begin{proof}
We use the estimate by Sormani and the second author \cite[Proposition 4.17]{Portegies-Properties}
\begin{equation}
\begin{split}
\int_{\R^n} & d_F (\langle T^1, \Phi^1, x \rangle, 
\langle T^2, \Phi^2, x \rangle ) \, dx 
\leq 
d_F ( T^1, T^2 ) 
\\ & \quad + 2 \sum_{j=1}^n \| \Phi^1_j - \Phi^2_j \|_\infty \left[\mass(T^2) + \mass(\partial T^2)\right].
\end{split}
\end{equation}
From this inequality, we conclude the estimate (\ref{eq:L1IneqDegree}) by using the link between slices and the degree explained in (\ref{eq:DegreeEqualities}). Indeed, for $\lm^n$-a.e. $x\in \R^n$,
\begin{equation}
\begin{split}
| \deg(T^1, \Phi^1, x) - \deg(T^2 , \Phi^2, x) | 
&\leq \left| \langle T^1, \Phi^1, x \rangle(1) - \langle T^2, \Phi^2, x \rangle (1) \right|\\
& \leq d_F(\langle T^1, \Phi^1, x \rangle, \langle T^2, \Phi^2, x \rangle).
\end{split}
\end{equation}
\end{proof}

\section{A consequence of Colding's volume estimate}
\label{se:ColdingVolume}

Throughout this section we use the notation $B_r(0)$ to denote the open ball of radius $r$ centered at $0$ in the Euclidean space $\R^n$.
Let $X$ be a metric space, $x \in X$, and a radius $R > 0$, such that 
\begin{equation}
d_\mathrm{GH} (B_R(x), B_R(0)) < \epsilon.
\end{equation}
Let $\{x_1, \dots, x_n\} \subset X$ and consider the map $\Phi_{x, x_1, \dots,x_n}$ given by 
\begin{equation}
\Phi_{x, x_1, \dots, x_n} (y) = \left( d_M( x_1, y) - d_M(x_1, x) ,\dots  ,d_M(x_n, y)- d_M(x_n, x) \right).
\end{equation}
We call $\Phi_{x, x_1, \dots, x_n}$ an $(\epsilon,R)$-chart around $x$ if there exist a metric space $Z$ and isometric embeddings $\phi: B_R(x) \to Z$ and $\psi: B_R(0) \to Z$, such that
\begin{equation}
d^Z_H(\phi(B_R(x)), \psi(B_R(0))) < \epsilon,
\end{equation}
and 
\begin{equation}
d_H^Z(\phi(\{x, x_1, \dots, x_n\}), \psi(\{0, Re_1, \dots, Re_n\})) < \epsilon,
\end{equation}
where $\{e_i\}$ is the standard orthonormal frame in $\R^n$, and $d_H^Z$ denotes the Hausdorff distance in $Z$.

Note that every coordinate function of $\Phi_{x, x_1, \dots, x_n}$ is Lipschitz with Lipschitz constant $1$.

In \cite[Section 2]{Colding-Ricci}, Colding shows the following result, that we phrase as a lemma.
\begin{lemma}[Colding \cite{Colding-Ricci}]
\label{le:ColdingVolume}
Let $\eta > 0$. There exist $\Lambda = \Lambda(\eta, n) > 0$, $R = R(\eta, n) > 1$,  and $\epsilon = \epsilon (\eta, n) > 0$ with the following property. For every $n$-dimensional complete Riemannian manifold $M$ with $\Ric_{M} \geq -(n-1) \Lambda$, with $p \in M$, such that $d_\mathrm{GH}(B_R(p), B_R(0) ) < \epsilon$, and every $(\epsilon,R)$-chart $\Phi$ around $p$, it holds that
\begin{equation}
\lm^n\left(B_1(0) \backslash \Phi(B_1(p))\right) < \eta^n/3
\end{equation}
and
\begin{equation}
\lm^n(\Phi(B_1(p))) \leq \hm^n(B_1(p)) < \omega_n + \eta^n/3.
\end{equation}
\end{lemma}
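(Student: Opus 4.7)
My plan is to deduce both inequalities from two inputs: Bishop--Gromov volume comparison, and Colding's integral-norm estimate that on a space Gromov--Hausdorff close to Euclidean, the distance chart $\Phi$ is close to an isometry.

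The second inequality is the easier half. The estimate $\lm^n(\Phi(B_1(p))) \leq \hm^n(B_1(p))$ follows from the area formula because each $\Phi_j$ is $1$-Lipschitz: Hadamard's inequality then gives $|\det D\Phi| \leq 1$ at every point of differentiability, so $\lm^n(\Phi(E)) \leq \hm^n(E)$ for every Borel $E \subset M$. For the bound $\hm^n(B_1(p)) < \omega_n + \eta^n/3$, I would invoke Bishop--Gromov: the volume of the unit ball in the simply connected space form of constant curvature $-\Lambda$ tends to $\omega_n$ as $\Lambda \downarrow 0$, so choosing $\Lambda = \Lambda(\eta,n)$ small enough makes this bound uniform.

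For the first inequality, I would start from the $(\epsilon,R)$-chart hypothesis to produce a Gromov--Hausdorff approximation $F \colon B_R(0) \to B_R(p)$ that sends $0$ close to $p$ and $R e_j$ close to $x_j$. A short computation then shows that $\Phi \circ F$ is $C(n)\epsilon$-close to the identity uniformly on $B_1(0)$, so $\Phi(B_1(p))$ is dense in $B_{1-C\epsilon}(0)$. The real issue is to rule out that $\Phi(B_1(p))$ misses a set of Lebesgue measure at least $\eta^n/3$; one does this by showing that $|\det D\Phi|$ is close to $1$ in an integral sense on most of $B_1(p)$, using the fact that under almost nonnegative Ricci and GH closeness to Euclidean, each distance function $d(x_j,\cdot)$ has gradient close in $L^2$ to the $j$-th coordinate direction. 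Combined with the near-equality case of Bishop--Gromov (which forces $\hm^n(B_1(p))$ to be close to $\omega_n$), one then forces $\lm^n(\Phi(B_1(p)))$ to be close to $\omega_n$ as well.

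The main obstacle is precisely the $L^2$ control on the gradients of the distance functions $d(x_j,\cdot)$, which is the genuine content of Section 2 of \cite{Colding-Ricci}. In the write-up, I would either invoke Colding's estimate as a black box after rescaling to reduce to the almost nonnegative regime, or reproduce a short version adapted to the $(\epsilon,R)$-chart formulation used here; the chart hypothesis gives exactly the data (GH closeness plus properly placed reference points) that Colding's argument requires.
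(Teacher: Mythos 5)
The paper gives no proof of this lemma; it is quoted directly from \cite[Section~2]{Colding-Ricci}, so there is no in-paper argument to compare against. Your sketch is therefore a reconstruction of Colding's own argument, and it does identify the right ingredients: Hadamard's inequality plus the area formula for $\lm^n(\Phi(B_1(p))) \le \hm^n(B_1(p))$, Bishop--Gromov with $\Lambda$ small for the upper volume bound, and Colding's $L^2$ estimates on distance-function gradients for the hard first inequality.

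One step is phrased in a way that hides (or inverts) where the real work happens. You write that ``the near-equality case of Bishop--Gromov \ldots forces $\hm^n(B_1(p))$ to be close to $\omega_n$,'' as though this were an input obtained independently. But Bishop--Gromov alone supplies only the \emph{upper} bound $\hm^n(B_1(p)) \le V_{-\Lambda}(1)$, which you already used for the second displayed inequality; the matching lower bound $\hm^n(B_1(p)) > \omega_n - \text{(small)}$ \emph{is} Colding's volume-convergence theorem, and it is derived from (not prior to) the $L^2$ gradient estimates. So the causal arrow points the other way. Relatedly, even once you know $\hm^n(B_1(p)) \approx \omega_n$ and $\int_{B_1(p)} |1 - \det D\Phi|$ is small, you cannot immediately conclude $\lm^n(\Phi(B_1(p))) \approx \omega_n$: high multiplicity could still make the image small. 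Colding closes this gap with a degree/boundary argument using the fact that $\Phi$ is a GH approximation, which your sketch gestures at with ``dense in $B_{1-C\epsilon}(0)$'' but does not actually invoke at the level of measure. Since you propose to cite Colding as a black box anyway, the cleanest fix is to state the invocation precisely (Colding's Theorem~0.1 or the relevant lemma of \cite{Colding-Ricci}) rather than attributing the volume lower bound to Bishop--Gromov.
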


From the above lemma, we can derive that on a large set, the $(\epsilon, R)$-chart $\Phi$ is locally one-to-one.
\begin{lemma}
\label{le:BoundMultiplicity}
Let $\eta > 0$. 
There exist $\Lambda = \Lambda (\eta, n)$, $R = R(\eta, n)$ and $\epsilon = \epsilon(\eta, n)$ with the following property. 
For every $n$-dimensional oriented complete Riemannian manifold $M$ with $\Ric_M \geq -(n-1) \Lambda$, $p \in M$ such that 
\begin{equation*}
d_{\mathrm{GH}} (B_R(p), B_R(0)) < \epsilon,
\end{equation*}
and every $(\epsilon,R)$-chart $\Phi$ around $p$, the set 
\begin{equation*}
G = \left\{ x \in B_1(0) \, \middle| \, \hm^0(\Phi^{-1}(x) \cap B_1(p) ) = 1\right\}
\end{equation*}
satisfies
\begin{equation*}
\lm^n(G) > \omega_n - \eta^n.
\end{equation*}
\end{lemma}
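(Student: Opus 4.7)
The plan is to reduce the multiplicity bound to a counting argument driven by the area formula, using Colding's Lemma~\ref{le:ColdingVolume} twice (once for each of its two conclusions). I would choose $\Lambda(\eta,n), R(\eta,n), \epsilon(\eta,n)$ to be exactly the parameters produced by \lemref{le:ColdingVolume} applied with the given $\eta$, so that both conclusions of that lemma are at our disposal.

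The first step is to observe that the Jacobian of $\Phi$ is bounded by $1$ almost everywhere on $M$. Indeed, each coordinate $\Phi_j$ is $1$-Lipschitz, so by Rademacher's theorem $|\nabla \Phi_j| \leq 1$ almost everywhere, and Hadamard's inequality applied to the matrix of partial derivatives yields $\jac \Phi = |\det D\Phi| \leq \prod_{j=1}^n |\nabla \Phi_j| \leq 1$ almost everywhere on $B_1(p)$. The area formula on the Riemannian manifold $M$ therefore gives
\begin{equation*}
\int_{\R^n} \hm^0(\Phi^{-1}(x) \cap B_1(p)) \, d\lm^n(x) = \int_{B_1(p)} \jac \Phi \, d\hm^n \leq \hm^n(B_1(p)) < \omega_n + \eta^n/3,
\end{equation*}
where the last inequality is the second conclusion of \lemref{le:ColdingVolume}.

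Next, I would stratify $B_1(0)$ by multiplicity. For $k \in \mathbb{N}$, let $G_k = \{ x \in B_1(0) : \hm^0(\Phi^{-1}(x) \cap B_1(p)) = k\}$, so $G = G_1$, and write $a := \lm^n(G_0)$, $b := \lm^n(G_1)$, $c := \sum_{k \geq 2} \lm^n(G_k)$. The first conclusion of \lemref{le:ColdingVolume} gives $a < \eta^n/3$, the partition identity gives $a + b + c = \omega_n$, and the area-formula estimate above gives
\begin{equation*}
b + 2c \leq \sum_{k \geq 1} k \, \lm^n(G_k) < \omega_n + \eta^n/3.
\end{equation*}
Subtracting the partition identity from this inequality yields $c < a + \eta^n/3 < 2\eta^n/3$, hence
\begin{equation*}
\lm^n(G) = b = \omega_n - a - c > \omega_n - \eta^n,
\end{equation*}
as desired.

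I do not expect any serious obstacle in this argument: the only non-routine point is recognising that the bound $\lm^n(\Phi(B_1(p))) \leq \hm^n(B_1(p))$ in \lemref{le:ColdingVolume} is really an instance of the area formula with Jacobian at most $1$, so that we in fact control the $L^1$-norm of the full multiplicity function, not just the measure of the image. Once this is noted, the combinatorial step separating ``multiplicity $\geq 2$'' from ``multiplicity $=1$'' is immediate.
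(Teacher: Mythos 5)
Your proof is correct and follows essentially the same route as the paper: choose the constants from Lemma~\ref{le:ColdingVolume}, bound the Jacobian of $\Phi$ by $1$ using that each component is $1$-Lipschitz, apply the area/coarea formula to control the integral of the multiplicity function, and combine this with the estimate on $\lm^n(B_1(0)\setminus\Phi(B_1(p)))$ to bound the measures of the sets of multiplicity $0$ and multiplicity $\geq 2$. The only cosmetic difference is that you make the stratification into $G_k$ explicit (the paper jumps directly to the set of multiplicity $\geq 2$) and invoke the area formula where the paper says coarea formula, which for maps from an $n$-manifold to $\R^n$ is the same statement.
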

\begin{proof}
We choose the constants $\Lambda, R$ and $\epsilon$ as in Lemma~\ref{le:ColdingVolume}, so that 
\begin{equation}
\omega_n - \eta^n/3 < \lm^n( \Phi(B_1(p) ) ) \leq \hm^n(B_1(p)) < \omega_n + \eta^n/3.
\end{equation}
Since every component of the map $\Phi$ is $1$-Lipschitz, the Jacobian $J\Phi$ of $\Phi$ is bounded, $|J \Phi| \leq 1$. 
We apply the coarea formula and find
\begin{equation}
\begin{split}
\hm^n( B_1(p) ) &\geq \int_{B_1(p)} |J \Phi| \, d\hm^n  \\
&= \int_{\R^n} \hm^0 \left( B_1(p) \cap \Phi^{-1} (x) \right) dx \\
&\geq \lm^n(\Phi(B_1(p)) ).
\end{split}
\end{equation}
Therefore,
\begin{equation}
\lm^n \left( \left\{ x \in \Phi(B_1(p)) \, \middle| \, \hm^0( B_1(p)\cap \Phi^{-1}(x) ) \geq 2 \right\}\right) < 2 \eta^n /3
\end{equation}
and consequently
\begin{equation}
\lm^n \left( G \right) > \omega_n - \eta^n.
\end{equation}
\end{proof}

\begin{lemma}
\label{le:DegreeManifold}
Let $\eta>0$. 
There exist $\Lambda = \Lambda(n)$, $R = R(\eta,n)$ and $\epsilon = \epsilon(\eta,n)$ such that if $M$ is an $n$-dimensional oriented complete Riemannian manifold with $\Ric_{M} \geq - (n-1)\Lambda $, $p \in M$, and
\begin{equation*}
d_{\mathrm{GH}}( B_R(p), B_R(0) ) < \epsilon,
\end{equation*}
then for every $(\epsilon, R)$-chart $\Phi$ around $p$ and
for every $x \in B_{1- \sigma}(0)$,
\begin{equation*}
\deg(T \llcorner B_1(p), \Phi, x) = \deg(T \llcorner B_1(p), \Phi, 0) = \pm 1,
\end{equation*}
where $\sigma = \sigma(\eta,n)$ is defined by
\begin{equation*}
\omega_n - \lm^n(B_{1-\sigma}(0)) = \eta^n.
\end{equation*}
\end{lemma}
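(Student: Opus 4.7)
The plan is to show that the $\mathbb{Z}$-valued BV function $\deg(\llbracket M \rrbracket \llcorner B_1(p), \Phi, \cdot)$ is constant on the connected open set $B_{1-\sigma}(0)$, with value $\pm 1$. Three ingredients combine: a pointwise evaluation of the degree at single-preimage points via slicing, a geometric estimate localizing the support of $\partial \Phi_\#(\llbracket M\rrbracket \llcorner B_1(p))$ outside $B_{1-\sigma}(0)$, and the rigidity of $\mathbb{Z}$-valued BV functions.

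I would first apply \lemref{le:BoundMultiplicity} with the same $\eta$, possibly enlarging $R$ and shrinking $\epsilon$ later, to obtain the set
\begin{equation*}
G := \left\{ x \in B_1(0) \, \middle| \, \hm^0(\Phi^{-1}(x) \cap B_1(p)) = 1 \right\}
\end{equation*}
with $\lm^n(G) > \omega_n - \eta^n$. Restricting attention to $\eta$ with $2\eta^n < \omega_n$, inclusion-exclusion inside $B_1(0)$ gives $\lm^n(G \cap B_{1-\sigma}(0)) > \omega_n - 2\eta^n > 0$. For a.e.\ $x$ in this intersection, the parametric representation of $\llbracket M \rrbracket$ as an integer rectifiable current of multiplicity one on a Kuratowski embedding of $M$, together with the slicing identity for rectifiable currents on $w^*$-separable dual spaces, yields $\langle \llbracket M \rrbracket \llcorner B_1(p), \Phi, x \rangle = \pm \delta_y$ for the unique preimage $y$. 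Then \eqref{eq:DegreeEqualities} gives $\deg(\llbracket M \rrbracket \llcorner B_1(p), \Phi, x) = \pm 1$ on a set of positive Lebesgue measure inside $B_{1-\sigma}(0)$.

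Next I would localize the boundary. Since $M$ is closed, $\partial \llbracket M \rrbracket = 0$, so $\partial(\llbracket M \rrbracket \llcorner B_1(p))$ is supported in $\{y \in M : d_M(p, y) = 1\}$. The key geometric claim is that, for the $(\epsilon, R)$-chart $\Phi = \Phi_{p, x_1, \dots, x_n}$ with $R$ large and $\epsilon$ small, every such $y$ satisfies $|\Phi(y)| \geq 1 - C(n)(\epsilon + R^{-1})$. This follows by using the Hausdorff approximation inside $Z$ to produce $y' \in \R^n$ with $|y'| \in [1 - 2\epsilon, 1 + 2\epsilon]$ and with $d_M(x_i, y) = |R e_i - y'| + O(\epsilon)$, $d_M(x_i, p) = R + O(\epsilon)$, and then expanding
\begin{equation*}
|Re_i - y'| - R = -e_i \cdot y' + \frac{|y'|^2 - (e_i \cdot y')^2}{2R} + O(R^{-2}),
\end{equation*}
so that $\Phi(y)_i = -e_i \cdot y' + O(\epsilon + R^{-1})$ and $|\Phi(y)| \geq |y'| - C(n)(\epsilon + R^{-1})$. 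Choosing $R = R(\eta, n)$ and $\epsilon = \epsilon(\eta, n)$ so that $C(n)(\epsilon + R^{-1}) < \sigma$ ensures that $\Phi(\partial B_1(p)) \cap B_{1-\sigma}(0) = \emptyset$, and hence $\partial \Phi_\#(\llbracket M \rrbracket \llcorner B_1(p))$ is supported outside $B_{1-\sigma}(0)$.

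To conclude, the Ambrosio--Kirchheim representation identifies the total variation of the distributional derivative of the BV representative of $\Phi_\#(\llbracket M\rrbracket \llcorner B_1(p))$ with the mass measure of its boundary. Thus the distributional derivative of $\deg(\llbracket M\rrbracket \llcorner B_1(p), \Phi, \cdot)$ vanishes on the connected open set $B_{1-\sigma}(0)$, and an integer-valued BV function with vanishing derivative on a connected open set is constant there. Combined with the $\pm 1$ value attained on $G \cap B_{1-\sigma}(0)$, this gives the conclusion. The main obstacle is the geometric estimate of the third paragraph: quantifying how accurately an $(\epsilon,R)$-chart recovers the Euclidean norm on a unit-scale ball around the base point, which is algebraically delicate but conceptually a standard $1/R$ Taylor expansion combined with careful bookkeeping of the Hausdorff-distance errors.
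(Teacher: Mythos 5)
Your proof follows essentially the same strategy as the paper: invoke \lemref{le:BoundMultiplicity} to obtain a positive-measure set of single-preimage points inside $B_{1-\sigma}(0)$ where the degree must be $\pm 1$, show $\Phi(\partial B_1(p))\cap B_{1-\sigma}(0) = \emptyset$ so that the degree is constant there, and combine. The one genuine addition you make is spelling out the Taylor-expansion argument showing that the $(\epsilon, R)$-chart is close to the Euclidean identity near $p$, which the paper compresses into the single remark that $\Phi$ is a $\sigma$-Gromov--Hausdorff approximation for $R$ large and $\epsilon$ small. That computation is correct and is a reasonable thing to make explicit.

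There is, however, one real discrepancy with the lemma as stated. The statement requires $\Lambda = \Lambda(n)$, independent of $\eta$, but you apply \lemref{le:BoundMultiplicity} with the given $\eta$ itself, which gives $\Lambda = \Lambda(\eta, n)$. The paper avoids this by first reducing without loss of generality to $\eta^n \leq \omega_n/4 =: \bar{\eta}^n$ and then applying \lemref{le:BoundMultiplicity} with the \emph{fixed} parameter $\bar{\eta}$, whose dependence is on $n$ alone; the set $G_{\bar\eta}$ of single-preimage points then satisfies $\lm^n(B_1(0)\setminus G_{\bar\eta}) < \omega_n/4$, which is still small enough to force $\lm^n(G_{\bar\eta}\cap B_{1-\sigma}(0)) > 0$ since $\eta^n\leq \omega_n/4$. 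Your inclusion-exclusion step works just as well with $\bar\eta$ in place of $\eta$, so the fix is purely a matter of which parameter you feed into \lemref{le:BoundMultiplicity}, but as written your version produces the wrong quantifier dependence for $\Lambda$. The $\eta$-dependence is then carried only by $R$, $\epsilon$, and $\sigma$, exactly as the statement asserts.
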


\begin{proof}
We may without loss of generality assume that $\eta^n \leq \omega_n/4 =: \bar{\eta}^n$. 

Note that $\Phi$ is a $\sigma$-Gromov-Hausdorff approximation, for $R$ large enough and $\epsilon$ small enough, only depending on $n$. 

We choose $R=R(\eta,n)$ and $\epsilon = \epsilon(\eta, n)$ such that $\Phi$ is an $\sigma$-Gromov-Hausdorff approximation, and moreover
\begin{equation*}
\Lambda_{\ref{le:DegreeManifold}}(n) = \Lambda_{\ref{le:BoundMultiplicity}}( \bar{\eta}, n), \qquad  R_{\ref{le:DegreeManifold}}(\sigma, n) \geq R_{\ref{le:BoundMultiplicity}}(\bar{\eta}, n), \qquad \epsilon_{\ref{le:DegreeManifold}}(\sigma, n) \leq \epsilon_{\ref{le:BoundMultiplicity}}(\bar{\eta}, n),
\end{equation*}
where the subscripts indicated the lemma in which the constants are introduced.

Consequently, $\Phi(\partial B_1(p)) \cap B_{1-\sigma}(0) = \emptyset$ and $\deg(T\llcorner B_1(p), \Phi, .)$ is constant on $B_{1-\sigma}(0)$. 
By Lemma~\ref{le:BoundMultiplicity} and our choice of $\sigma$, 
\begin{equation}
\lm^n\left( \left\{ x \in B_{1-\sigma}(0) \,\middle| \, \hm^0(\Phi^{-1}(x) \cap B_1(p) ) = 1 \right\} \right) > 0.
\end{equation}
Therefore,
\begin{equation}
|\deg(T \llcorner B_1(p), \Phi, 0)| = 1.
\end{equation}
\end{proof}

\section{Proof of the main theorems}
\label{se:MainTheorem}

Theorems~\ref{th:FirstMain} and \ref{th:SecondMain} in the introduction are implied by the following theorem.

\begin{theorem}
\label{th:MainTheoremText}
Let $M_i \in \mathcal{M}(n, K, v, D)$ ($i = 1, 2, \dots$) converge in the Gromov-Hausdorff distance to the (compact) metric space $X$. 
We assume without loss of generality that $M_i$ and $X$ are isometrically embedded in a common $w^*$-separable Banach space $Z$.

Then, 
\begin{enumerate}[(i)]
\item \label{item:setIsX} a subsequence of the associated integral current spaces $M_i$ converges in the flat distance to $(X, d_X , T)$, with $\set(T) = X$,
\item \label{item:TisHa} the mass $\|T\|$ equals $\hm^n$, the Hausdorff measure on $X$,
\item \label{item:multiplicity} $T$ has multiplicity one $\hm^n$-a.e.,
\item \label{item:homology} for every $q \in X$, every $t > 0$, and every $S \in I_n(X)$ with $\|\partial S\|(B_t(q)) = 0$, there exists an integer $k \in \mathbb{Z}$ such that $S = k T$ on $B_t(q)$.
\end{enumerate}
\end{theorem}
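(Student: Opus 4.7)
My plan is: extract a flat-convergent subsequence $\llbracket M_i \rrbracket \to T$ inside $Z$; analyze $T$ at a regular point $p \in \mathcal{R}$ using \lemref{le:DegreeManifold} and \lemref{le:L1DegConv} to show $\Theta_n(\|T\|, p) = 1$ with multiplicity one; and then use the Cheeger-Colding structure of $X$ recalled in \secref{suse:CheegerColding} to globalize to (i)--(iii), and to propagate a local constancy of $\Delta S/\Delta T$ to (iv).

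By intrinsic flat precompactness of $\mathcal{M}(n, \Lambda, v, D)$, a subsequence of $\llbracket M_i \rrbracket$ converges in the flat distance on $Z$ to an $n$-dimensional integral current $T$ with $\partial T = 0$ and $\spt T \subset X$. Fix a regular $p \in \mathcal{R}$, a small $\eta > 0$, and $p_i \in M_i$ with $p_i \to p$. By GH convergence $M_i \to X$ inside $Z$, for $r$ sufficiently small and $i$ sufficiently large one can build frames $\{x_1^i, \dots, x_n^i\} \subset M_i$ and $\{x_1, \dots, x_n\} \subset X$ yielding $1$-Lipschitz charts $\Phi_i = \Phi_{p_i, x_1^i, \dots, x_n^i}$ and $\Phi = \Phi_{p, x_1, \dots, x_n}$ which, after rescaling by $1/r$, become $(\eta, R)$-charts around $p_i/r$, $p/r$ in $r^{-1}M_i$, $r^{-1}X$. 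The rescaled Ricci lower bound tends to $0$ as $r \to 0$, so for small $r$ the hypothesis of \lemref{le:DegreeManifold} holds. By radial slicing in $s$, for a.e.\ $s \in (r/2, r)$ the restrictions $T_i^s := \llbracket M_i \rrbracket \llcorner B_s(p_i)$ and $T^s := T \llcorner B_s(p)$ are integral with uniformly bounded mass and boundary mass; for a suitable choice of $s$ and a further subsequence, $d_F^Z(T_i^s, T^s) \to 0$ and $\|\Phi_i - \Phi\|_\infty \to 0$ on their supports. \lemref{le:DegreeManifold} gives $\deg(T_i^s, \Phi_i, \cdot) = \pm 1$ on a set of $\lm^n$-measure at least $(\omega_n - \eta^n)s^n$; then \lemref{le:L1DegConv} forces the same for $\deg(T^s, \Phi, \cdot)$ up to constants in $\eta$. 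Combined with $|J\Phi| \leq 1$, this gives $\|T\|(B_s(p)) \geq (\omega_n - C\eta^n) s^n$; Colding's upper volume bound for the $M_i$ plus lower semicontinuity of mass yields $\|T\|(B_s(p)) \leq (\omega_n + C\eta^n) s^n$. Letting $\eta, s \to 0$ gives $\Theta_n(\|T\|, p) = 1$, and the representation $\|T\| = \lambda \theta_T \hm^n \llcorner Y$ forces $\lambda(p)\theta_T(p) = 1$, i.e.\ multiplicity one $\hm^n$-a.e. Since $\mathcal{R}$ has full $\hm^n$-measure in $X$, $\|T\| = \hm^n$ on $X$, and uniform Bishop-Gromov lower bounds at non-regular points force $\set(T) = X$, giving (i), (ii), (iii).

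For (iv), fix $q \in X$, $t > 0$, and $S \in I_n(X)$ with $\|\partial S\|(B_t(q)) = 0$. The piece of $S$ supported outside $\set(T) = X$ is zero, so the representation theorem provides a Borel $k_S := \Delta S / \Delta T : X \to \mathbb{Z}$ with $S = T \llcorner k_S$. At a regular $p \in B_t(q) \cap \mathcal{R}$ and a scale $s$ as above with $B_s(p) \subset B_t(q)$, the slicing identity $\langle S, \Phi, x \rangle = \langle T, \Phi, x \rangle \llcorner k_S$ together with (\ref{eq:DegreeEqualities}) yields $\deg(S, \Phi, \cdot) = \pm k_S \circ \Phi^{-1}$ on the $\lm^n$-large subset of $\Phi(B_s(p))$ where $\Phi$ is essentially injective and $\deg(T, \Phi, \cdot) = \pm 1$. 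The BV function $\deg(S, \Phi, \cdot)$ has total variation bounded by $\|\Phi_\# \partial S\|$; since $\|\partial S\|(B_t(q)) = 0$, this variation vanishes on $\Phi(B_s(p))$, so $k_S$ is $\hm^n$-essentially constant on a neighborhood of every regular point of $B_t(q)$. The Cheeger-Colding local connectedness of $\mathcal{R} \cap B_t(q)$ by paths staying inside $B_t(q)$ recalled in \secref{suse:CheegerColding} then yields a single integer $k \in \mathbb{Z}$ with $k_S = k$ $\hm^n$-a.e.\ on $B_t(q)$, i.e.\ $S = kT$ on $B_t(q)$.

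The main obstacle I anticipate is carrying out the radial slicing cleanly: one needs a single radius $s$ for which $T_i^s \to T^s$ in the flat distance \emph{and} both have uniformly bounded boundary mass, so that \lemref{le:L1DegConv} gives a vanishing right-hand side when combined with $\|\Phi_i - \Phi\|_\infty \to 0$. This calls for Fubini in $s$ on the slice-flat estimate, a diagonal subsequence extraction, and the almost-nonnegative Ricci bound obtained by rescaling. A secondary bookkeeping difficulty is tracking orientation signs in the limit, which is exactly what makes the flat limit unique only up to the involution $\iota$ appearing in \thmref{th:FirstMain}.
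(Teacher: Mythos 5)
Your high-level plan mirrors the paper's almost exactly: extract a flat-convergent subsequence, localize to balls of almost-nonnegative rescaled Ricci, combine \lemref{le:DegreeManifold} and \lemref{le:L1DegConv} to pin down the degree of the limit chart, deduce a two-sided density estimate, and then globalize using $\hm^n(X\setminus\mathcal{R})=0$, Bishop--Gromov, and the Cheeger--Colding connectedness result. So the skeleton matches. However, two steps as you wrote them do not close.

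\textbf{The multiplicity step (iii) has a genuine gap.} You deduce $\Theta_n(\|T\|,p)=1$ at regular $p$ and then invoke $\|T\| = \lambda\,\theta_T\,\hm^n\llcorner Y$ to conclude $\lambda(p)\theta_T(p)=1$, and from that ``multiplicity one.'' But $\lambda$ is the Ambrosio--Kirchheim area factor, bounded away from $0$ and $\infty$ only by dimensional constants $c(n)\le\lambda\le C(n)$; there is no reason a priori that $\lambda(p)=1$, and $\lambda(p)\theta_T(p)=1$ with $\theta_T(p)\in\N$ does not imply $\theta_T(p)=1$ unless $\lambda(p)>\tfrac12$. This is precisely why the paper does \emph{not} read multiplicity off the density. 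Instead it works with the zero-dimensional slices: on the good set $G$ where $\mass(\langle T\llcorner B_r(p),\Phi,x\rangle)=1$, the slice is $\pm\theta_T(p_x)\delta_{p_x}$ for a single preimage $p_x$, so mass $1$ forces $\theta_T(p_x)=1$ directly, bypassing the area factor altogether. You need this slice-mass argument (or some other justification that $\lambda\equiv 1$ on $\mathcal{R}$, which is a separate nontrivial fact) to make (iii) correct.

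\textbf{The degree-constancy step in (iv) is stated imprecisely and needs the localization you elide.} You bound the total variation of $\deg(S,\Phi,\cdot)$ by $\|\Phi_\#\partial S\|$ and argue it vanishes on $\Phi(B_s(p))$ because $\|\partial S\|(B_t(q))=0$. But $\Phi$ is a distance-function chart defined on all of $X$, so $\partial S$ (supported \emph{outside} $B_t(q)$) can perfectly well push forward into $B_{(1-\sigma)s}(0)$ under $\Phi$ --- the GH-approximation property of $\Phi$ is only local near $p$. The object that must be used is $\deg(S\llcorner B_s(p),\Phi,\cdot)$, whose boundary does include the slicing contribution from $\partial B_s(p)$; the reason this degree is constant on $B_{(1-\sigma)s}(0)$ is that $\Phi$ is a $\sigma s$-GH approximation, so $\Phi$ maps the sphere of radius $s$ outside the smaller concentric ball. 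That is the argument the paper runs, and it is not interchangeable with a global total-variation bound on $\deg(S,\Phi,\cdot)$.

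Apart from these two points, your reduction of (i) and (ii) from the density estimate, your radial-slicing/Fubini extraction of a good radius and a good subsequence, and your use of the Cheeger--Colding connectedness of $\mathcal{R}_{\epsilon,\delta}\cap B_t(q)$ to globalize local constancy in (iv) all track the paper's reasoning accurately.
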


Before we prove Theorem~\ref{th:MainTheoremText}, we explain how it implies the theorems in the introduction. 
Certainly, Theorem~\ref{th:SecondMain} is a direct consequence of part (\ref{item:homology}) in the above theorem. 

The map $F:\mathcal{M}^{IF/\iota}(n, \Lambda, v, D) \to \mathcal{M}^{GH}(n, \Lambda, v, D)$ given by
\begin{equation*}
(X, d_X, T) \to X
\end{equation*}
considered in Theorem~\ref{th:FirstMain} is well-defined as 
Sormani and Wenger have shown that if the intrinsic flat distance between two integral current spaces is zero, there is a (current-preserving) isometry between the spaces \cite{Sormani-Intrinsic}. Therefore, the map 
 does not depend on the choice of representative. Moreover, by item (\ref{item:setIsX}) of Theorem~\ref{th:MainTheoremText}, $X$ is indeed a compact metric space. 

The map $F$ is one-to-one by (\ref{item:homology}). 
A one-to-one continuous map from a compact to a Hausdorff space is automatically a homeomorphism. So the only non-trivial part of Theorem~\ref{th:FirstMain} left to show is the continuity of $F$.

If $(X_i, d_{X_i} , T_i) \in \mathcal{M}^{IF/\iota}(n, \Lambda, v, D)$ converge in the intrinsic flat distance to an integral current space $(X, d_X, T)$, there are Riemannian manifolds $M_i\in \mathcal{M}^{IF/\iota}(n, \Lambda, v, D)$ such that the associated integral current spaces $\Lbrack M_i \Rbrack$ converge in the intrinsic flat distance to $(X, d_X, T)$ while 
\begin{equation}
d_\mathcal{F}((X_i, d_{X_i}, T_i), \Lbrack M_i \Rbrack ) \to 0.
\end{equation}
By the Gromov compactness and embedding theorems, we may isometrically embed $F(\Lbrack M_i \Rbrack)$ into a common metric space, in which they converge in the Hausdorff distance to a compact metric space $Y$. It suffices to show that $Y$ is isometric to $X$. 

Theorem~\ref{th:MainTheoremText} shows that $\Lbrack M_i \Rbrack \to (Y, d_Y, S)$ for some $S \in I_n(Y)$.
Hence, 
\begin{equation}
d_\mathcal{F}((X, d_X, T), (Y, d_Y, S)) = 0
\end{equation}
and thus $Y$ is isometric to $X$. 

We will now prove Theorem~\ref{th:MainTheoremText}.

\begin{proof}
By the Ambrosio-Kirchheim compactness theorem \cite[Theorem 5.2]{Ambrosio-Currents}, a subsequence of the associated currents $T_i$ converge in the weak sense to an integral current $T$ without boundary.
A theorem by Wenger \cite[Theorem 1.4]{Wenger-Flat} implies that the $T_i$ converge to $T$ in the flat distance in $Z$ as well. 

Let $p \in X$ be a regular point, that is $p \in \mathcal{R} \subset X$. (The definition of the sets $\mathcal{R}$ and $\mathcal{R}_{\epsilon, \delta}$ are as in \cite{Cheeger-Colding-II}, see also Section~\ref{suse:CheegerColding}).
Let $\eta > 0$.
Let $\Lambda(n)$, $\epsilon(\eta, n)$ and $R(\eta, n)$ be as in Lemma~\ref{le:DegreeManifold}.
Since $p \in \mathcal{R}$, there exists a number $0 < \delta < \sqrt{ \Lambda / K }$ such that $p \in \mathcal{R}_{\epsilon / R, \delta R}$.
In other words, for all $r < \delta$,
\begin{equation}
d_{GH}( B_{r R}(p), B_{r R}(0) ) < \epsilon r.
\end{equation}

We will now show that for a subsequence, we may localize the flat convergence to balls.
Since $M_i \to X$ in the Hausdorff distance in $Z$, there exists a sequence $p_i \in M_i$ such that $p_i \to p$.
By the proof of \cite[Lemma 4.1]{Sormani-Arzela}, see also \cite[Theorem 4.16]{Portegies-Properties}, for yet another subsequence, for $\lm^1$-a.e. radius $0 < r < \delta$, the currents restricted to balls of radius $r$ converge, that is
\begin{equation}
d_F ( T_i \llcorner B_r(p_i), T \llcorner B_r(p) ) \to 0. 
\end{equation}

We choose $p^1, \dots, p^n \in X$ such that 
\begin{equation}
\Phi = \Phi_{p, p^1, \dots, p^n}
\end{equation}
is an $(\epsilon r, r R)$-chart around $p$.

In order to prove (\ref{item:TisHa}), we will lift this $(\epsilon, R)$-chart $\Phi$ to charts on the manifolds in the approximating sequence. By the results in the previous section, we have good control of the degree of these charts, and Lemma~\ref{le:L1DegConv} allows us to pass this control to the limit. Estimates on the degree will immediately imply density estimates.

We argue as follows. Since $M_i \to X$ in the Hausdorff distance, there exist $p^j_i$, $j = 1, \dots, n$ such that $p^j_i \to p^j$ as $i \to \infty$, such that $\Phi^i := \Phi_{p^i, p^i_1, \dots, p^i_n}$ is an $(\epsilon,R)$-chart around $p_i$ for $i$ large enough. 
The maps $\Phi^i$ converge to $\Phi$ uniformly by the triangle inequality.

By Lemma~\ref{le:L1DegConv}, $\deg( T_i \llcorner B_r(p_i), \Phi^i, .) \to \deg( T \llcorner B_r(p),\Phi,.)$ in $L^1(\R^n) $ as $i \to \infty$. 
By this convergence result and Lemma~\ref{le:DegreeManifold}, for all $ x \in B_{(1-\sigma) r}(0)$,
\begin{equation}
\deg(T \llcorner B_r(p), \Phi, x ) = \deg(T \llcorner B_r(p), \Phi, 0 ) = \pm 1.
\end{equation}
In particular,
\begin{equation}
\begin{split}
\|T\|(B_r(p)) &\geq \int_{B_{(1-\sigma)r}(0)} |\deg(T \llcorner B_r(p), \Phi, x)| \, dx \\
&\geq \omega_n r^n - \eta^n r^n.
\end{split}
\end{equation}
Moreover, by lower-semicontinuity of the mass measure under weak convergence,
\begin{equation}
\|T\|(B_r(p)) \leq \liminf_{i \to \infty} \|T_i\|(B_r(p_i)) \leq \omega_n r^n + \eta^n r^n /3.
\end{equation}

In particular, $p \in \set(T)$, and since $\eta>0$ was arbitrary, the density $\Theta_n(\|T\|,p)$ of $\|T\|$ in $p$ is $1$.
Since $\hm^n(X \backslash \mathcal{R}) = 0$, in fact $\hm^n = \|T\|$, which shows (\ref{item:TisHa}).

By the results by Cheeger and Colding however, we know that $\hm^n$ satisfies the Bishop-Gromov estimate, so that in particular, at \emph{every} point in $X$ the density of $\hm^n (= \|T\|)$ is (strictly) positive. 
Consequently, $\set(T) = X$, which finishes the proof of (\ref{item:setIsX}).

Our next objective is to show (\ref{item:multiplicity}), namely that $T$ has multiplicity one. To this end, we use that the measure of the set $\{|\deg(T\llcorner B_r(p), \Phi,.)| =1\}$ is very close to $\|T\|(B_r(p))$. 
This can only happen if most points in the image of $\Phi$ have exactly one pre-image, where the multiplicity is one.

More precisely, since
\begin{equation}
\label{eq:DefinitionGoodSetG}
\begin{split}
(\omega_n - \eta^n)r^n 
&= \int_{B_{(1-\sigma)r}(0)} |\deg(T \llcorner B_r(p) , \Phi, x) | \, dx \\
&\leq \int_{B_{(1-\sigma)r}(0)} \mass( \langle T\llcorner B_r(p), \Phi, x \rangle ) \, dx \\
&\leq \|T\|(B_r(p)) \leq (\omega_n + \eta^n/3 )r^n,
\end{split}
\end{equation}
if we define the ``good'' set $G \subset B_{(1-\sigma)r}(0)$ by
\begin{equation}
G := \left\{ x \in B_{(1-\sigma)r }(0) \, \middle| \, \mass(\langle T\llcorner B_r(p), \Phi, x \rangle ) = 1 \right\},
\end{equation}
it follows that
\begin{equation}
\lm^n( B_{(1-\sigma) r} (0)\backslash G ) < 2 \eta^n r^n.
\end{equation}
For $\lm^n$-a.e. $x \in G$, there is a unique $p_x \in \set(T)\cap B_r(p)$ such that $\Phi(p_x) = x$ and the multiplicity $\theta_T(p_x) = 1$. Therefore, again using that $\|T\|(B_r(p)) \leq \omega_n r^n+\eta^n r^n /3 $, we find
\begin{equation}
\|T\| ( \{ x \in B_r(p) \, | \, \theta_T(x) \neq 1  \} ) < 4 \eta^n r^n.
\end{equation}
This shows that in every $p \in \mathcal{R}$, the density of the set where the multiplicity of $T$ is larger than $1$ equals zero. 
Since $\hm^n( X \backslash \mathcal{R})= 0$, $\theta_T \equiv 1$. This shows (\ref{item:multiplicity}).

Finally, we need to show (\ref{item:homology}). Let therefore $q \in X$ and $S \in I_n(X)$ such that $\|\partial S \|(B_t(q)) = 0$, for some $t > 0$. Let $p \in \mathcal{R} \cap B_t(q)$ and $r>0$ be as above, with the additional assumption that $r < d(p, X \backslash B_t(q))$. 
The representation theorem for integer rectifiable currents \cite[Theorem 9.1]{Ambrosio-Currents} implies that there is a Borel function $\Delta S / \Delta T: X \to \mathbb{Z}$ such that
\begin{equation}
S = T \llcorner \frac{\Delta S}{\Delta T}.
\end{equation}
We will prove (\ref{item:homology}) by showing that on a large set $\Delta S / \Delta T$ is equal to the ratio of two degrees, which in turn are constant.
The next few lines will be devoted to the choice of a good representative for $\Delta S/ \Delta T$.

We first introduce an ``average" version, based on a majority vote:
\begin{equation}
\left( \frac{\Delta S} {\Delta T}\right)_{p,r} := \arg \max_{k \in \mathbb{Z}} \|T\| \left( B_r(p) \cap \left\{ \frac{\Delta S}{\Delta T} = k \right\} \right),
\end{equation}
where, if the maximum is not unique, we give preference to the smallest absolute value of $k$, and if this gives no decision, the positive value. However, this is quite irrelevant. 

By the Lebesgue differentiation theorem, for $\hm^n$-a.e. $q_1 \in X$, 
\begin{equation}
\label{eq:LebDiff}
\frac{\Delta S}{\Delta T} (q_1) := \lim_{r \to 0} \left( \frac{\Delta S}{\Delta T} \right)_{q_1,r}.
\end{equation}
For convenience, we will from now on only work with the precise representative of $\Delta S / \Delta T$ which we define to be the right-hand-side of (\ref{eq:LebDiff}) if this limit exists, and $0$ otherwise.

By the characterization of slices \cite[Theorems 5.7 and 9.7]{Ambrosio-Currents}, for $\lm^n$-a.e. $x \in G$, the slice $\langle T, \Phi, x \rangle$ is just a signed delta-measure,
\begin{equation}
\langle T \llcorner B_r(p), \Phi, x \rangle = \pm \delta_{p_x},
\end{equation}
while 
\begin{equation}
\langle S \llcorner B_r(p), \Phi, x \rangle = \frac{\Delta S}{\Delta T}(p_x) \langle T \llcorner B_r(p), \Phi, x \rangle.
\end{equation}
We apply both sides to the function identically equal to $1$, and conclude that 
\begin{equation}
\frac{\Delta S}{\Delta T} ( p_x ) = \frac{\deg(S \llcorner B_r(p), \Phi, x)}{ \deg(T \llcorner B_r(p), \Phi, x)}.
\end{equation}

If we inspect the proof of Lemma~\ref{le:DegreeManifold}, we see that the constants $\Lambda, R$, and $\epsilon$ were chosen such that $\Phi$ is a $\sigma r$-Gromov-Hausdorff approximation. Hence, both $\deg(T \llcorner B_r(p), \Phi, .)$ and $\deg(S \llcorner B_r(p), \Phi, .)$ are constant on the ball $B_{(1-\sigma) r}(0)$ and we can find a compact subset $\tilde{G} \subset G$, $\lm^n(\tilde{G}) > (\omega_n - 3 \eta^n) r^n$  such that \emph{every} $x \in \tilde{G}$ has exactly one pre-image $p_x$ under $\Phi$  in $\set(T) \cap B_r(p)$, and for this $p_x$ 
\begin{equation}
\label{eq:QuotientDegrees}
\frac{\Delta S}{\Delta T} ( p_x ) = \frac{\deg(S \llcorner B_r(p), \Phi, 0)}{ \deg(T \llcorner B_r(p), \Phi, 0)}.
\end{equation}

It follows that
\begin{equation}
\begin{split}
\|T\|\left( \frac{ \Delta S}{\Delta T} \neq \left(\frac{\Delta S }{\Delta T}\right)_{p,r} \right)
&\leq \|T\|(B_r(p) \backslash \Phi^{-1}(\tilde{G})) \\
&\leq \|T\|(B_r(p)) - \int_{\tilde{G}} |\deg(T \llcorner B_r(p), \Phi, x)| \, dx\\
&\leq \omega_n r^n + \eta^n r^n / 3 - \omega_n r^n + 3 \eta^n \\
&\leq 4 \eta^n r^n.
\end{split}
\end{equation}
Therefore, for $\eta$ small enough, only depending on the dimension, the average $(\Delta S / \Delta T)_{p,r}$ is jointly continuous in $(p,r)$ on the domain $p \in \mathcal{R}_{\epsilon/  R, \delta R} \cap B_t(q)$, $0 < r < \min(\delta, d_X(p, X \backslash B_t(q)) )$. 
In particular, $\Delta S/ \Delta T$ is continuous on $\mathcal{R}_{\epsilon / R, \delta R} \cap B_t(q)$.

However, by a result by Cheeger and Colding \cite[Corollary 3.9 and 3.10]{Cheeger-Colding-II} (see also Section~\ref{suse:CheegerColding}), for all $q_1, q_2 \in B_t(q) \cap X$, there is a $\delta$ such that $q_1$ and $q_2$ lie in the same component of $\mathcal{R}_{\epsilon / R, \delta R} \cap B_t(q)$. 
Therefore, $\Delta S / \Delta T$ is constant on $B_t(p)$. This shows (\ref{item:homology}).
\end{proof}

\begin{remark}
To obtain the necessary control on the degree, we use that an $(\epsilon, R)$-chart $\Phi$ is a Gromov-Hausdorff approximation. 
It is possible to conclude such control under weaker assumptions. A lot of information can be extracted from the existence of a map $\Psi: X \to \R^n$ on the limit space $X$, for which every component is $1$-Lipschitz, and such that the excess $\|T\|(B_r(p)) - T(\chi_{B_r(p)} d\Psi)$ is small. 
We have decided to not present this argument in the manuscript, as it is considerably longer, and at this point, it is unclear whether there are applications in which the easier argument by a Gromov-Hausdorff approximation cannot be applied.
\end{remark}

\bibliographystyle{plain}
\bibliography{SlicVol}

\end{document}